\title{The rainbow saturation number is linear}
\author{Natalie Behague\thanks{Mathematics Institute, University of Warwick, Coventry, CV4~7AL, UK. This research was supported by a PIMS postdoctoral fellowship. Email: \href{mailto:natalie.behague@warwick.ac.uk}{natalie.behague@warwick.ac.uk}.} \and Tom Johnston\thanks{School of Mathematics, University of Bristol, Bristol, BS8~1UG, UK and Heilbronn Institute for Mathematical Research, Bristol, UK. Email: \href{mailto:tom.johnston@bristol.ac.uk}{tom.johnston@bristol.ac.uk}.} \and Shoham Letzter\thanks{Department of Mathematics, University College London, Gower Street, London WC1E~6BT, UK. Email: \href{mailto:s.letzter@ucl.ac.uk}{s.letzter@ucl.ac.uk}. Research supported by the Royal Society.} \and Natasha Morrison\thanks{Department of Mathematics and Statistics, University of Victoria, Canada. Research supported by NSERC Discovery Grant RGPIN-2021-02511 and NSERC Early Career Supplement DGECR-2021-00047. Email: \href{mailto:nmorrison@uvic.ca}{nmorrison@uvic.ca}.} 
 \and Shannon Ogden
 \thanks{Department of Mathematics and Statistics, University of Victoria, Canada. Supported by NSERC CGS M. Email: \href{mailto:sogden@uvic.ca}{sogden@uvic.ca}.}}
\newtheoremstyle{case}{}{}{\normalfont}{}{\itshape}{:}{ }{}
\newtheorem{thm}{Theorem}[section]
\newtheorem{lem}[thm]{Lemma}
\newtheorem{prop}[thm]{Proposition}
\theoremstyle{definition}
\newtheorem{qn}[thm]{Question}
\newtheorem{claim}[thm]{Claim}
\newtheorem{constr}[thm]{Construction}
\newtheoremstyle{case}{}{}{\normalfont}{}{\itshape}{\normalfont:}{ }{}
\theoremstyle{case}
\Crefname{prop}{Proposition}{Propositions}
\DeclareMathOperator{\sat}{sat}
\DeclareMathOperator{\wsat}{wsat}
\DeclareMathOperator{\rwsat}{rwsat}
\DeclareMathOperator{\rsat}{rsat}
\DeclareMathOperator{\prsat}{prsat}
\DeclareMathOperator{\ex}{ex}
\newcommand{\R}{\mathfrak{R}}
\newcommand{\N}{\mathbb{N}}
\newcommand{\xy}{x|y}
\numberwithin{equation}{section}
\begin{document}

\maketitle

\begin{abstract}
    Given a graph $H$, we say that an edge-coloured graph $G$ is $H$-rainbow saturated if it does not contain a rainbow copy of $H$, but the addition of any non-edge in any colour creates a rainbow copy of $H$. The rainbow saturation number $\rsat(n,H)$ is the minimum number of edges among all $H$-rainbow saturated edge-coloured graphs on $n$ vertices. We prove that for any non-empty graph $H$, the rainbow saturation number is linear in $n$, thus proving a conjecture of Gir\~{a}o, Lewis, and Popielarz. In addition, we also give an improved upper bound on the rainbow saturation number of the complete graph, disproving a second conjecture of Gir\~{a}o, Lewis, and Popielarz.
\end{abstract}

\section{Introduction} 
\label{Sec:Intro}

For a fixed graph $H$, we say that a graph $G$ is \emph{$H$-saturated} if it does not contain $H$ as a subgraph, but adding any extra edge creates a copy of $H$ as a subgraph. 
The \emph{saturation number} of $H$, denoted by $\sat(n,H)$, is the minimum number of edges in an $H$-saturated graph $G$ on $n$ vertices.
This can be thought of as a dual to the classical Tur\'{a}n extremal number $\ex(n,H)$ that counts the maximum number of edges in an $H$-free graph on $n$ vertices. Since a maximal $H$-free graph is $H$-saturated, $\ex(n,H)$ equivalently counts the maximum number of edges in an $H$-saturated graph on $n$ vertices. The saturation number in contrast counts the minimum number of edges among such graphs.

The saturation number was first studied independently by Zykov \cite{Zykov} and  Erd\H{o}s, Hajnal, and Moon \cite{EHM}, who proved that $\sat(n,K_r) = (r-2)(n-1) - \binom{r-2}{2}$, where as usual $K_r$ denotes the complete $r$-vertex graph. Later, 
K\'{a}szonyi and Tuza \cite{KT} proved that the saturation number of every graph $H$ is linear in $n$, that is, $\sat(n,H) = O_H(n)$. For more information and many other results related to the saturation number see the survey of Faudree, Faudree, and Schmitt \cite{FFS}. In this paper we provide analogous results to these for the rainbow saturation number: a variant of the saturation number for edge-coloured graphs. 

The generalisation of saturation to edge-coloured graphs was first considered by Hanson and Toft \cite{HT}. Following this, Barrus, Ferrara, Vandenbussche, and Wenger \cite{BFVW} considered the particular case of $t$-rainbow saturation, where there are exactly $t$ colours available. Gir\~{a}o,  Lewis, and Popielarz \cite{GLP} initiated the study of the natural generalisation to the case where the palette of colours available is unlimited, rather than bounded by $t$. This is the focus of our paper.

A $t$\emph{-edge-coloured} graph is an ordered pair $(G,c)$, where $c$ is a (not necessarily proper) edge-colouring of the graph $G$ using colours from $[t]=\{1,2,...,t\}$. An edge-colouring of a graph is said to be \emph{rainbow} if every edge is assigned a distinct colour. We say that a $t$-edge-coloured graph $(G,c)$ is $(H,t)$\emph{-rainbow saturated} if $(G,c)$ does not contain a rainbow copy of $H$, but the addition of any non-edge in any colour from $[t]$ creates a rainbow copy of $H$ in $G$. Note that this requires $t\geq |E(H)|$. The $t$\emph{-rainbow saturation number} of $H$, denoted by $\rsat_t(n,H)$, is the minimum number of edges in an $(H,t)$-rainbow-saturated graph on $n$ vertices. 

When the number of possible edge colours is infinite (say the set of colours is $\N$), an edge-coloured graph $(G,c)$ is $H$\emph{-rainbow saturated} if $(G,c)$ does not contain a rainbow copy of $H$, but the addition of any non-edge in any colour from $\N$ creates a rainbow copy of $H$. The \emph{rainbow saturation number} of $H$, denoted by $\rsat(n,H)$, is the minimum number of edges in an $H$-rainbow saturated graph on $n$ vertices. 

In \cite{GLP}, Gir\~{a}o, Lewis, and Popielarz conjectured that, like ordinary saturation numbers, the rainbow saturation number of any non-empty graph $H$ is at most linear in $n$, and they proved this for graphs with some particular properties. 

\begin{thm}[Gir\~ao, Lewis, and Popielarz; Theorem 2~(4) in \cite{GLP}]
	\label{GLP construction}Let $H$ be a graph with a non-pendant edge that is not contained in a triangle. Then $\rsat(n,H) = O(n)$.
\end{thm}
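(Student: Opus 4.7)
Since $uv$ is a non-pendant edge of $H$ not contained in any triangle, both $u$ and $v$ have degree at least $2$ in $H$, and the sets $N_H(u)\setminus\{v\}$ and $N_H(v)\setminus\{u\}$ are disjoint. Set $d_u := \deg_H(u)$, $d_v := \deg_H(v)$, and $H' := H - uv$. The plan is to construct an $H$-rainbow saturated graph on $n$ vertices with $O(n)$ edges via a ``core plus pendants'' scheme reminiscent of the classical K\'aszonyi--Tuza argument.

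The \emph{core} $S$ is a constant-size induced subgraph carrying a family of rainbow copies of $H'$, all sharing the same embedding of $V(H')$, with designated portal vertices $p_1, \dots, p_{d_u-1}$ serving as the images of $N_H(u)\setminus\{v\}$ and $q_1, \dots, q_{d_v-1}$ serving as the images of $N_H(v)\setminus\{u\}$; these two portal sets are disjoint thanks to the triangle-free condition. The rainbow copies in $S$ use many different palettes, so that for every colour $c$ at least one palette avoids $c$. I would also arrange the core to be internally saturated: any non-edge within $S$, coloured arbitrarily, completes a rainbow $H$ --- this is a finite, constant-size construction. The remaining vertex set $I$ is made independent, and each $x\in I$ is joined to every portal via pendant edges coloured with private colours. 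In order to absorb the case in which the added colour coincides with a pendant colour, I would give each outside vertex two (or more) \emph{disjoint} private pendant palettes, so that at least one palette always avoids any given colour. The total edge count is $|E(S)| + O(|I|) = O(n)$.

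For verification, $G$ is rainbow-$H$-free because outside vertices are coloured so as to prevent their participation in any rainbow $H$, and $S$ itself was built to be rainbow-$H$-free. For saturation, consider adding any non-edge $\{a,b\}$ in any colour $c$. The representative case is $a,b\in I$: the new edge plays the role of $uv$, with $a$ playing $u$ via the pendants $a p_i$ and $b$ playing $v$ via the pendants $b q_j$; the rest of the rainbow $H$ is supplied by a rainbow copy of $H'$ in $S$ whose palette avoids $c$, using an alternative private palette at $a$ or $b$ should $c$ happen to coincide with a pendant colour. The remaining cases ($a\in I$, $b\in S$, and $a,b\in S$) are handled similarly, with the internal-saturation property of $S$ supplying the rainbow $H$ when both endpoints lie in the core.

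The main obstacle is the unrestricted colour palette: for \emph{every} colour $c$ we must produce a rainbow $H$ in which $c$ appears only on the newly added edge, so the core has to carry enough rainbow copies of $H'$ on disjoint palettes and each pendant vertex has to come with backup private palettes. The triangle-free condition is essential to keep the two portal sets disjoint, which is what allows each outside vertex to play either the role of $u$ or the role of $v$ using its existing pendants; and the non-pendant condition ensures $d_u, d_v \geq 2$, so this pendant structure is non-trivial. The bulk of the work is checking that the case analysis for saturation really does find a rainbow $H$ in every situation while the construction remains rainbow-$H$-free.
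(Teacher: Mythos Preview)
This result is quoted from Gir\~ao--Lewis--Popielarz and not re-proved here, but the paper notes that their construction is exactly Construction~\ref{construction} specialised to an edge $xy$ lying in no triangle. In that construction the large set $M$ is attached to $r$ copies of $H\setminus\{x,y\}$, and the decisive feature is that the edges from $M$ to each copy carry \emph{repeated} colours --- every edge labelled $e\in E(H')$ receives the fixed colour $\chi_0(e)$. This colour repetition is precisely what rules out rainbow copies of $H$.

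Your scheme makes the opposite colouring choice: the pendant edges from $I$ to the portals get \emph{private} colours. That is fatal to rainbow-$H$-freeness. The bipartite graph between $I$ and the portal set $P=\{p_1,\dots,p_{d_u-1},q_1,\dots,q_{d_v-1}\}$ is then a rainbow $K_{|I|,\,d_u+d_v-2}$, and for many $H$ this already contains a rainbow copy of $H$ using no edge of the core at all. Concretely, take $H=K_{2,3}$ with $u$ on the side of size~$2$ and $v$ on the side of size~$3$: the edge $uv$ is non-pendant and lies in no triangle, and $|P|=3+2-2=3$. Any two vertices of $I$ together with the three portals then induce a rainbow $K_{2,3}$, so your graph is not rainbow-$H$-free. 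No engineering inside the core $S$ can repair this, because the offending copy avoids $S$ entirely apart from the portal vertices, and it is exactly the pendant edges at those portals that you made rainbow. Your assertion that ``outside vertices are coloured so as to prevent their participation in any rainbow $H$'' is thus false as stated.

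The fix is precisely the GLP device: colour all edges from $I$ to a given portal with the \emph{same} colour. That kills rainbow copies living between $I$ and $P$, but then one must work harder to ensure that adding an edge in an arbitrary colour still completes a rainbow $H$ --- which is why the GLP construction keeps $r>|E(H'')|$ copies of the core and recolours one edge black in each. A smaller point: in your saturation check, when $a,b\in I$ play the roles of $u,v$, the piece supplied by the core should be $H-\{u,v\}$, not $H'=H-uv$ (the latter still contains the vertices $u$ and $v$).
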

\noindent In fact, \Cref{GLP construction} is a consequence of a stronger result that proves $\rsat_t(n,H) = O(n)$ for any such $H$ and any $t \ge e(H)$.

Our main result proves that the rainbow saturation number of every graph is linear, thus confirming the conjecture of Gir\~{a}o, Lewis, and Popielarz~\cite{GLP}.

\begin{restatable}{thm}{thmLinear}
\label{linear rainbow}
	Every non-empty graph $H$ satisfies $\rsat(n,H)=O(n)$.
\end{restatable}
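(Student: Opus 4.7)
The plan is to construct, for every non-empty $H$ and every sufficiently large $n$, an explicit $H$-rainbow saturated edge-coloured graph on $n$ vertices with $O_H(n)$ edges. By \Cref{GLP construction}, I may assume $H$ has no non-pendant edge lying outside every triangle, i.e.\ every non-pendant edge of $H$ is in a triangle. Fix any edge $uv$ of $H$ and set $H' = H - uv$. The construction will consist of a bounded-size \emph{gadget} $D$ on $O_H(1)$ vertices, together with a sparse attachment of the remaining $n - |V(D)|$ vertices to $D$, using only a constant-size palette of colours.

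The gadget $D$ is designed so that, for each ordered pair of distinguished ``port'' vertices $(a,b) \in V(D)^2$ and each colour $c$ from the (constant) palette, there is a rainbow copy of $H'$ inside $D$ that sends $(u,v) \mapsto (a,b)$ and avoids the colour $c$. This can be arranged by placing $O_H(1)$ rainbow copies of $H'$ inside $D$, each obtained from a fixed base copy by a single colour swap on a ``spare'' edge. Crucially, $D$ itself contains no rainbow copy of $H$: this is achieved by deliberately repeating one colour across the different copies of $H'$ in $D$, so that any extension of such a copy to a full $H$ inside $D$ must reuse a colour. The remaining vertices are then each attached to the ports of $D$ with $O_H(1)$ edges, coloured from the same constant palette, so that each attached vertex can play the role of $u$ (or of $v$) in any rainbow copy of $H'$ inside $D$. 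The resulting graph has $O_H(n)$ edges, and for every non-edge $xy$ in every colour $c$, closing the precomputed $c$-avoiding rainbow $H'$ of $D$ with the inserted edge produces a rainbow $H$.

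The main obstacle will be verifying simultaneously that (a) $G$ contains no rainbow $H$ and (b) every insertion of a non-edge in any colour creates one: these pull in opposite directions, since having many near-copies of $H$ inside $G$ helps with (b) but threatens (a). The resolution leans on the assumption that every non-pendant edge of $H$ lies in a triangle, so any would-be rainbow $H$ in $G$ that uses an attached vertex $x$ is forced to use several attachment edges at $x$; colouring the attachments from a small repeating palette then forces two of these edges to share a colour, killing rainbowness. All of the case analysis in the proof concentrates in this verification; once checked, the bound $\rsat(n,H) = O_H(n)$ is immediate.
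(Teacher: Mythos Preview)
This is a plan, not a proof: you explicitly defer the ``main obstacle''---showing the construction has no rainbow $H$ while every insertion creates one---and that is exactly where all the content lies. The paper follows the same high-level outline (a bounded gadget with many attached ``port'' vertices) but makes it work via a very specific construction (\Cref{construction}) together with a minimal-counterexample argument (\Cref{no rainbow H generalised} and Claim~\ref{claim: use every vertex}) that is genuinely delicate; none of that work is present in your sketch.

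Worse, the plan as described has concrete holes. You never reduce to connected $H$ (the paper needs \Cref{disconnect} for this). After invoking \Cref{GLP construction} you are left with graphs in which every non-pendant edge lies in a triangle, but such $H$ may still have pendant vertices; an attached vertex can then play a degree-$1$ role using a single attachment edge, so your ``small repeating palette forces a repeat'' argument does not fire. The paper handles this case separately via \Cref{pendant}. Even when $\delta(H)\ge 2$, ``several'' attachment edges may mean just two, which forces no repeat unless the palette has size $1$---and a size-$1$ palette makes property~(b) hopeless. In the paper the repeated colours on attachment edges are indexed by the edges of $H'$, and the no-rainbow-$H$ argument is a global count of how many distinct edge-labels a putative rainbow copy can realise; the triangle hypothesis enters precisely in that count (via $|T'|\ge 1$), not by forcing many edges at a single attached vertex. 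Finally, your description of the gadget $D$ (``deliberately repeating one colour'') is far too vague to guarantee simultaneously that $D$ contains no rainbow $H$ and that for every colour $c$ it supplies a rainbow $H'$ avoiding $c$; reconciling these two requirements is the heart of the problem.
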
 

Interestingly, when $t$ is fixed the $t$-rainbow saturation number is not in general linear in $n$.  Barrus, Ferrara, Vandenbussche, and Wenger \cite{BFVW} proved in particular that for all $t \ge |E(H)|$, $\rsat_t(n,H) \ge c_1\frac{n\log{n}}{\log{\log{n}}}$ whenever $H$ is a complete graph, and $\rsat_t(n,H) \ge c_2n^2$ if $H$ is a star, where $c_1$ and $c_2$ are constants depending only on $t$ and the size of $H$. 

The lower bound on the $t$-rainbow saturation number for complete graphs was improved independently by Ferrara, Johnston, Loeb, Pfender, Schulte, Smith, Sullivan, Tait, and Tompkins \cite{FJLPSSSTT},  by Kor\'{a}ndi \cite{Korandi}, and by Gir\~{a}o,  Lewis, and Popielarz \cite{GLP} to  $\rsat_t(n,K_r) \ge c_1n\log{n}$, which is tight up to a constant factor.

Gir\~{a}o, Lewis, and Popielarz also conjectured that for any complete graph $K_r$ where $r\geq 3$, the rainbow saturation number satisfies $\rsat(n, K_r) = 2(r-2)n + O(1)$ when $n\geq 2(r-2)$. We prove that $\rsat(n, K_r)$ is in fact significantly less than this conjectured value.

\begin{restatable}{thm}{thmClique}
\label{clique}
Let $r \ge 3$. Then $\rsat(n, K_r) \le (r + 2\sqrt{2r})n + c_r$, where $c_r$ is a constant depending only on $r$.
\end{restatable}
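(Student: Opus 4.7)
The plan is to exhibit an explicit $K_r$-rainbow saturated edge-coloured graph $G$ on $n$ vertices with at most $(r + 2\sqrt{2r})n + c_r$ edges, improving on the Gir\~{a}o--Lewis--Popielarz conjecture of $2(r-2)n$. The classical $K_r$-saturation construction $K_{r-2} + \overline{K_{n-r+2}}$ has about $(r-2)n$ edges and cannot be made rainbow saturated on its own; since $(r + 2\sqrt{2r}) - (r-2) = 2 + 2\sqrt{2r}$, the improvement over the classical graph should add roughly $(2 + 2\sqrt{2r})n$ extra edges. A natural implementation takes $r-2$ ``primary hubs'' adjacent to every other vertex, together with extra structure contributing this many extra edges -- for instance an auxiliary subgraph of average degree $\Theta(\sqrt{r})$ on the non-hub vertices, or equivalently $\Theta(\sqrt{r})$ additional ``secondary hubs'' of high degree.

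The colouring must satisfy two competing requirements: (i) $G$ itself contains no rainbow $K_r$; and (ii) for every non-edge $uv$ and every colour $c$, the graph $G + uv$ with $uv$ coloured $c$ contains a rainbow $K_r$. For (i) I would use standard monochromatic-star tricks (colouring all edges incident to a chosen vertex with a single forbidden colour, so that every $K_r$ through that vertex has a repeated colour) combined with an anti-Ramsey-type colouring on any hub sub-clique that might otherwise host a rainbow $K_r$. For (ii) the extra structure must supply, for every non-edge $uv$, a family of candidate rainbow $K_r$ extensions through $\{u,v\}$ with enough colour diversity that for every $c \in \N$ some extension avoids $c$; equivalently, the colour-sets of these extensions must have empty intersection.

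The main obstacle is satisfying (i) and (ii) simultaneously: monochromatic structure enforces (i) but constrains the available rainbow extensions for (ii), while richer colour variety aids (ii) but risks creating unwanted rainbow $K_r$'s. The trade-off is essentially quadratic: each candidate extension involves $\binom{r}{2}-1$ coloured edges, so $\Theta(\sqrt{r})$ ``independent'' extensions per non-edge are needed to cover all colours, forcing the extra structure to have average degree $\Theta(\sqrt{r})$. This is exactly what produces the $2\sqrt{2r}$ factor. Once the structure and colouring are fixed, the proof reduces to a direct verification: edge count at most $(r + 2\sqrt{2r})n + c_r$, no rainbow $K_r$ in $G$, and every non-edge has the required rainbow-saturation extension property.
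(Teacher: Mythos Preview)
Your proposal correctly identifies the architecture --- a hub $A$ of size roughly $r + 2\sqrt{2r}$ joined completely to an independent set $U$ --- and the square-root trade-off between suppressing rainbow $K_r$'s inside the hub and guaranteeing that every non-edge in $U$ extends to a rainbow $K_r$ in any colour. But this is a plan, not a proof: the entire difficulty is the colouring of the hub, and you have not produced one. Saying ``the proof reduces to a direct verification once the structure and colouring are fixed'' skips the only non-routine step.

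Your suggested tools are also too blunt. A monochromatic star at $v$ forces \emph{every} clique of size $\ge 3$ through $v$ to repeat a colour, so $v$ cannot lie in any rainbow extension at all; this kills property (ii) rather than helping it. What is actually needed is a coloured clique on about $\binom{s}{2}+s$ vertices (with $\binom{s}{2}+1$ close to $r$) whose maximum rainbow subclique has exactly $\binom{s}{2}+1$ vertices, \emph{and} such that a maximum rainbow subclique can be found avoiding any prescribed vertex and any prescribed colour. The $s-1 \approx \sqrt{2r}$ surplus vertices supply the flexibility; achieving all three properties simultaneously with only $s$ surplus vertices is the crux.

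The paper does this with an explicit gadget (\Cref{claim:s}): take a set $S$ of size $s$, vertex set $S \cup S^{(2)}$, and for each pair $\{x,y\}$ give the two edges $(x,\{x,y\})$ and $(y,\{x,y\})$ a common colour unique to that pair; all other edges rainbow. A short count shows every rainbow clique has size at most $\binom{s}{2}+1$, while $T(x,y) := \{x,y\} \cup (S^{(2)}\setminus\{\{x,y\}\})$ attains this and, by varying $x,y$, avoids any given vertex or colour class (each colour class being a path of length two). This lemma is the idea your outline is missing. Two further disjoint rainbow cliques $T_1,T_2$ of size $t = r - \binom{s}{2} - 1$ with no edges between them absorb the residue, and the few non-edges between $T_1$ and $T_2$ are handled by \Cref{absorb}; after that the verification really is routine.
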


In Section \ref{Sec:Prelim}, we establish some useful basic results regarding rainbow saturation, including \Cref{disconnect,pendant} which allow us to assume that any counterexample to \Cref{linear rainbow} is connected and has no pendant edges.
We describe the construction used to prove \Cref{linear rainbow}  in Section \ref{Sec:Constr}: given a graph $H$, we construct an edge-coloured graph on $n$ vertices with $O(n)$ edges such that the addition of (almost) any non-edge creates a rainbow copy of $H$. 
In Section \ref{Sec:no rainbow}, we show that if $e$ is an edge contained in a triangle then the graph produced by this construction does not contain a rainbow copy of $H$. Together with \Cref{GLP construction}, this completes the proof of \Cref{linear rainbow}.
Finally, we prove \Cref{clique} in Section \ref{sec:Cliques}.

\section{Preliminaries} 
\label{Sec:Prelim}
In this section we provide preliminary results that deal with some easy classes of graphs. We begin by establishing that the existence of an edge-coloured graph $G$ on $n$ vertices with $O(n)$ edges that is ``almost'' $H$-rainbow-saturated (save for at most linearly many problematic non-edges) is enough to prove that $\rsat(n,H)=O(n)$. For a colour $c$, we say that a non-edge $e \in E(\overline{G})$ is \emph{$c$-bad} for $G$ if the addition of $e$ in colour $c$ to $G$ does not create a rainbow copy of $H$. We say that a non-edge is \emph{bad} if there exists a colour $c$ such that $e$ is $c$-bad for $G$.

\begin{prop}
\label{absorb}
Let $H$ be a graph, and let $G$ be an $n$-vertex edge-coloured graph. Suppose that $G$ has $m$ bad non-edges and does not contain a rainbow copy of $H$. Then $\rsat(n,H)\le |E(G)| + m$. 
\end{prop}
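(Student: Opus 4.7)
The plan is to iteratively absorb the bad non-edges of $G$ into a larger edge-coloured graph, adding them one at a time in colours that never create a rainbow copy of $H$. Concretely, I would construct a sequence $G = G_0 \subseteq G_1 \subseteq \cdots$ of edge-coloured graphs on the vertex set of $G$ by the following greedy rule: given $G_i$, which we maintain to contain no rainbow copy of $H$, if $G_i$ has a bad non-edge $e$, choose a colour $c$ such that $e$ is $c$-bad for $G_i$, and form $G_{i+1}$ by adding $e$ in colour $c$; if $G_i$ has no bad non-edges, stop. Since $e$ was chosen $c$-bad for $G_i$, the graph $G_{i+1}$ still contains no rainbow copy of $H$, so the invariant propagates.

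The key observation driving the argument is that ``badness'' is monotone under edge addition: if $f$ is a non-edge of $G_i$ that is \emph{not} bad for $G_i$, then for every colour $c'$ the graph $G_i + f$ (with $f$ coloured $c'$) contains a rainbow copy of $H$; since $G_{i+1}$ is obtained from $G_i$ by adding an edge, that same rainbow copy survives in $G_{i+1} + f$, and hence $f$ is not bad for $G_{i+1}$ either. Contrapositively, any non-edge of $G_{i+1}$ that is bad for $G_{i+1}$ was already bad for $G_i$. Moreover, the specific edge $e$ added at step $i$ was bad for $G_i$ but is no longer a non-edge of $G_{i+1}$, so the set of bad non-edges shrinks by at least one at each step.

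Since the initial set of bad non-edges has size $m$, the process terminates after at most $m$ steps at a graph $G^*$ with $|E(G^*)| \le |E(G)| + m$, no rainbow copy of $H$, and no bad non-edges; the latter two properties together say exactly that $G^*$ is $H$-rainbow saturated, yielding $\rsat(n,H) \le |E(G)| + m$ as desired. There is no real obstacle in this proof: the entire content is the monotonicity observation above, which is an immediate consequence of the fact that rainbow copies of $H$ are preserved under edge addition, and the rest is just a strictly decreasing counting argument on the set of bad non-edges.
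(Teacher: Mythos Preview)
Your proposal is correct and follows essentially the same greedy absorption argument as the paper. The only cosmetic difference is that the paper fixes an enumeration $e_1,\dots,e_m$ of the bad non-edges of $G$ upfront and processes them in order (skipping any $e_i$ that has ceased to be bad), whereas you pick an arbitrary bad non-edge at each step; your explicit monotonicity observation is exactly what justifies the paper's terse claim that the final graph $G_m$ is $H$-rainbow saturated.
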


\begin{proof} 
	Let $\{e_1,\ldots, e_m\}$ be the set of bad non-edges of $G$. We construct an $n$-vertex graph $G_m$ and colouring $\chi$ of $E(G_m)$ as follows: set $G_0:= G$, where each edge of $G_0$ is coloured as it is in $G$. Consider each $i\in [m]$ in turn. If there is a colour $c$ such that $e_i$ is $c$-bad for $G_{i-1}$, then we define $G_i$ to be $G_{i-1} \cup \{e_i\}$, set $\chi(e_i) = c$ (that is, $e_i$ receives colour $c$), and keep the colours of the other edges as in $G_{i-1}$. Note that there may be multiple such $c$, in which case we pick one arbitrarily. Otherwise, if $e_i$ is not bad for $G_{i-1}$, set $G_i := G_{i-1}$. Observe that $G_m$ is $H$-rainbow-saturated, and since we added at most $m$ edges during this process, $G_m$ has at most $|E(G)|+m$ edges. Hence, $\rsat(n,H)\le |E(G)| + m$ as required. 
\end{proof}

A direct result of \Cref{absorb} shows that in order to prove \Cref{linear rainbow} it suffices to consider connected graphs $H$. 

\begin{prop}
\label{disconnect}
	Let $H$ be a disconnected graph and let $H'$ be a component of $H$ that has the most vertices, and subject to this, the most edges. If $\rsat(n,H')=O(n)$, then $\rsat(n,H)=O(n)$. 
\end{prop}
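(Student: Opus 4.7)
The plan is to apply \Cref{absorb} to a suitable construction. Set $H'' := H - H'$, $r := |V(H'')|$, and $m := |E(H'')|$. Partition $V(G) = V_1 \sqcup V_2$ with $|V_2| = (m+1)r$, a constant depending only on $H$. On $V_1$, place an $H'$-rainbow-saturated graph $G^*$ with $O(n)$ edges, using a palette $C_0$ (available by hypothesis). Split $V_2$ into $m+1$ blocks of size $r$, and on each block place a rainbow copy of $H''$ with its own palette $C_i$; choose $C_0, C_1, \dots, C_{m+1}$ pairwise disjoint.

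The main argument is as follows. First, every non-edge $e$ with both endpoints in $V_1$ fails to be bad: for any colour $c$, the rainbow saturation of $G^*$ gives a rainbow copy $H^*$ of $H'$ inside $G^* + e$ using colours in $C_0 \cup \{c\}$; since $c$ lies in at most one of the palettes $C_1, \dots, C_{m+1}$, at least one $V_2$-block supplies a rainbow $H''$ whose palette is disjoint from the colours of $H^*$, and combining these yields a rainbow $H$. Hence all bad non-edges are incident to $V_2$, numbering at most $|V_2|\cdot n = O(n)$. Second, $G$ contains no rainbow $H$: each connected component of $H$ must embed into a connected subgraph of $G$, hence into $V_1$ or into a single $V_2$-block; since $G^*$ contains no rainbow $H'$, the image of $H'$ must lie in some $V_2$-block, forcing a component of $H''$ to be isomorphic to $H'$ by the maximality of $|V(H')|$, $|E(H')|$. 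If no component of $H''$ is isomorphic to $H'$ this is a contradiction and \Cref{absorb} concludes.

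The main obstacle is the remaining case, in which $H$ has $k \geq 2$ components isomorphic to $H'$: across the $m+1$ $V_2$-blocks sit $(m+1)(k-1) \geq k$ pairwise vertex- and colour-disjoint copies of $H'$, which combine with a rainbow copy of $H''' := H - kH'$ to form a rainbow $H$ already inside $G$. To handle this, I would rebuild $V_2$ so that, for every colour $c$, it still supplies a rainbow $H''$ whose palette is disjoint from $C_0 \cup \{c\}$, but such that any family of pairwise vertex- and colour-disjoint rainbow $H'$-subgraphs inside $V_2$ has size at most $k - 1$. A first attempt is to take $V_2$ to consist of $k-1$ vertex-disjoint rainbow copies of $H'$ together with $|E(H''')|+1$ vertex-disjoint rainbow copies of $H'''$, plus one or more \emph{spare} rainbow $H'$-copies whose palettes are designed to overlap the others in a carefully chosen way: the overlap pattern should ensure that no $k$ of the $V_2$ rainbow $H'$s are pairwise colour-disjoint, while allowing, for each colour $c$, a choice of $k - 1$ pairwise colour-disjoint $V_2$ rainbow $H'$s together with a rainbow $H'''$ whose combined palette avoids $c$. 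Designing this overlap pattern is a small combinatorial-design problem, and is, I expect, the technically hardest step of the proof.
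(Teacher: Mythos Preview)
Your overall architecture matches the paper's: place an $H'$-rainbow-saturated graph on all but $O(1)$ vertices, attach a constant-size gadget that supplies a rainbow copy of $H\setminus H'$ avoiding any prescribed colour, and invoke \Cref{absorb}. You also correctly locate the one real difficulty: when $H$ has $k\ge 2$ components isomorphic to $H'$, a gadget built from vertex- and colour-disjoint copies of $H''=H-H'$ already contains $k$ pairwise disjoint rainbow $H'$s, and hence a rainbow $H$.

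The gap is that you do not actually resolve this case. Your suggested fix --- planting $k-1$ ``base'' rainbow $H'$s together with spare copies whose \emph{palettes} overlap the others in a pattern to be designed --- is left as an unspecified ``small combinatorial-design problem''. It is not clear such a colour-overlap scheme exists with the properties you list: you need that no $k$ of the planted $H'$s are pairwise colour-disjoint, yet for every colour $c$ some $k-1$ of them are pairwise colour-disjoint and all avoid $c$. At the very least this is not obvious, and your write-up gives no construction.

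The paper's resolution sidesteps colours entirely and uses \emph{vertex} sharing. For each of the $s-1$ extra $H'$-components it takes two rainbow copies of $H'$ glued at a single vertex (call the result $H_2$), and for each non-$H'$ component it takes two vertex-disjoint rainbow copies; all of this is coloured rainbow with fresh colours. Since $|V(H_2)|=2|V(H')|-1$, each $H_2$ contains at most one vertex-disjoint copy of $H'$, and by the maximality of $H'$ no non-$H'$ component contains a copy of $H'$; hence the gadget side holds at most $s-1$ pairwise disjoint $H'$s and no rainbow $H$ can be assembled. On the other hand, any fixed colour lies in at most one of the two $H'$-copies inside each $H_2$ and in at most one of the two copies of each other component, so the gadget always supplies a rainbow $H\setminus H'$ avoiding that colour. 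This one-line gluing trick is exactly what your ``hardest step'' needs; replacing your palette-overlap plan with it makes the argument complete.
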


\begin{proof}
Let $s$ be the number of components of $H$ which are isomorphic to $H'$. As $\rsat(n,H')=O(n)$, we have $\rsat(n-m,H')=O(n)$, where $m$ is given by $m:=2 \left(|V(H)| - |V(H')|\right) - (s-1)$. Let $(G,\chi)$ be an $H'$-rainbow-saturated graph on $n-m$ vertices with as few edges as possible. 

Define a graph $G^*$ on $n$ vertices as follows: let $H_2$ be obtained from $H'$ by gluing two disjoint copies of $H'$ together at an arbitrary vertex. Let $G^*$ be the disjoint union of $G$ with $s-1$ disjoint copies of $H_2$ and two disjoint copies of each component of $H$ that is not isomorphic to $H'$. See Figure~\ref{fig:disc} for an example. Let $\chi^*$ be an edge-colouring of $E(G^*)$ where $\chi^*(e) = \chi(e)$ for every edge $e \in G$, and $G^*\setminus V(G)$ is rainbow with colours not used in $\chi$. 

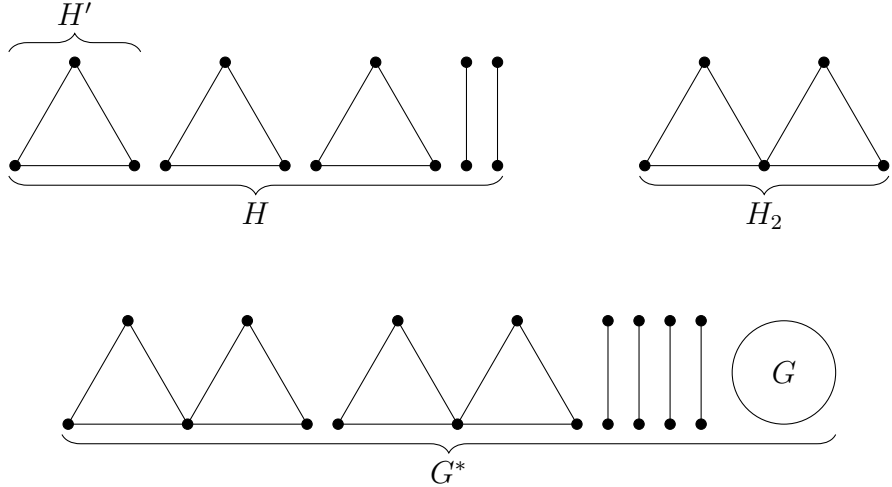
\begin{figure}[htbp]
\centering
\begin{tikzpicture}[scale=\textwidth/12cm]
    \draw[fill] (0,1) circle (.05);
    \draw[fill] (-0.577, 0) circle (.05);
    \draw[fill] (0.577, 0) circle (.05);
    \draw (0, 1) -- (0.577, 0) -- (-0.577, 0) -- (0,1);

    \draw[decorate, decoration={brace, amplitude = 7}] (-0.637, 1.1) -- (0.637, 1.1);
    \node[above] at (0, 1.25) {$H'$};

    \begin{scope}[shift={(1.454, 0)}]
        \draw[fill] (0,1) circle (.05);
        \draw[fill] (-0.577, 0) circle (.05);
        \draw[fill] (0.577, 0) circle (.05);
        \draw (0, 1) -- (0.577, 0) -- (-0.577, 0) -- (0,1);
    \end{scope}

    \begin{scope}[shift={(2.908, 0)}]
        \draw[fill] (0,1) circle (.05);
        \draw[fill] (-0.577, 0) circle (.05);
        \draw[fill] (0.577, 0) circle (.05);
        \draw (0, 1) -- (0.577, 0) -- (-0.577, 0) -- (0,1);
    \end{scope}

    \begin{scope}[shift={(3.785, 0)}]
        \draw[fill] (0,1) circle (.05);
        \draw[fill] (0, 0) circle (.05);
        \draw (0, 1) -- (0,0);
    \end{scope}

    \begin{scope}[shift={(4.085, 0)}]
        \draw[fill] (0,1) circle (.05);
        \draw[fill] (0, 0) circle (.05);
        \draw (0, 1) -- (0,0);
    \end{scope}

    \draw[decorate, decoration={brace, mirror, amplitude = 7}] (-0.637, -0.1) -- (4.135, -0.1);
    \node[below] at (1.749, -0.25) {$H$};

    \begin{scope}[shift={(6.085, 0)}]
        \draw[fill] (0,1) circle (.05);
        \draw[fill] (-0.577, 0) circle (.05);
        \draw[fill] (0.577, 0) circle (.05);
        \draw[fill] (1.154,1) circle (.05);
        \draw[fill] (1.731,0) circle (.05);
        \draw (0, 1) -- (0.577, 0) -- (-0.577, 0) -- (0,1);
        \draw (1.154, 1) -- (0.577, 0) -- (1.731, 0) -- (1.154,1);
    \end{scope}

    \draw[decorate, decoration={brace, mirror, amplitude = 7}] (5.458, -0.1) -- (7.866, -0.1);
    \node[below] at (6.662, -0.25) {$H_2$};

    \begin{scope}[shift={(0.5135, -2.5)}]

    \begin{scope}[shift={(0, 0)}]
        \draw[fill] (0,1) circle (.05);
        \draw[fill] (-0.577, 0) circle (.05);
        \draw[fill] (0.577, 0) circle (.05);
        \draw[fill] (1.154,1) circle (.05);
        \draw[fill] (1.731,0) circle (.05);
        \draw (0, 1) -- (0.577, 0) -- (-0.577, 0) -- (0,1);
        \draw (1.154, 1) -- (0.577, 0) -- (1.731, 0) -- (1.154,1);
    \end{scope}

    \begin{scope}[shift={(2.608, 0)}]
        \draw[fill] (0,1) circle (.05);
        \draw[fill] (-0.577, 0) circle (.05);
        \draw[fill] (0.577, 0) circle (.05);
        \draw[fill] (1.154,1) circle (.05);
        \draw[fill] (1.731,0) circle (.05);
        \draw (0, 1) -- (0.577, 0) -- (-0.577, 0) -- (0,1);
        \draw (1.154, 1) -- (0.577, 0) -- (1.731, 0) -- (1.154,1);
    \end{scope}

    \begin{scope}[shift={(4.639, 0)}]
        \draw[fill] (0,1) circle (.05);
        \draw[fill] (0, 0) circle (.05);
        \draw (0, 1) -- (0,0);
    \end{scope}

    \begin{scope}[shift={(4.939, 0)}]
        \draw[fill] (0,1) circle (.05);
        \draw[fill] (0, 0) circle (.05);
        \draw (0, 1) -- (0,0);
    \end{scope}

    \begin{scope}[shift={(5.239, 0)}]
        \draw[fill] (0,1) circle (.05);
        \draw[fill] (0, 0) circle (.05);
        \draw (0, 1) -- (0,0);
    \end{scope}

    \begin{scope}[shift={(5.539, 0)}]
        \draw[fill] (0,1) circle (.05);
        \draw[fill] (0, 0) circle (.05);
        \draw (0, 1) -- (0,0);
    \end{scope}

    \draw (6.339, 0.5) circle (.5);
    \node at (6.339, 0.5) {$G$};

    \draw[decorate, decoration={brace, mirror, amplitude = 7}] (-0.637, -0.1) -- (6.839, -0.1);
    \node[below] at (3.101, -0.25) {$G^*$};

\end{scope}

\end{tikzpicture}
\caption{An example of the graph $G^*$ created via the construction described in \Cref{disconnect}. Here $s=3$.}\label{fig:disc}
\end{figure}

Observe that $G^*$ contains no rainbow copy of $H$ (by our choice of $H'$, there are at most  $s-1$ disjoint copies of $H'$ in $G^*\setminus G$ and $G$ contains no rainbow copy of $H'$). Moreover, the addition of any non-edge (in any colour) within $G$ creates a rainbow copy of $H$ in $G^*$, since $G^* \setminus V(G)$ contains a rainbow copy of $H \setminus V(H')$ avoiding any given colour. As there are at most $m(n-m)+ \binom{m}{2}=O(n)$ other non-edges in $G^*$, by \Cref{absorb} we have $\rsat(n,H)=O(n)$, as required.
\end{proof}

Therefore, in what follows we may (and will) assume that $H$ is a connected graph. Our next result shows that, if $H$ contains a vertex of degree $1$, then $\rsat(n,H)=O(n)$. 

\begin{prop}
\label{pendant}
Let $H$ be a connected graph with $\delta(H)=1$. Then $\rsat(n,H)=O(n)$.
\end{prop}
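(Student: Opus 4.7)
The plan is to apply \Cref{absorb}: it suffices to construct, for each $n$, an $n$-vertex edge-coloured graph $G$ with $O(n)$ edges, no rainbow copy of $H$, and at most $O(n)$ bad non-edges. The trivial case $H=K_2$ is handled by the empty graph (so $\rsat(n,K_2)=0$), so I may assume $|V(H)|\ge 3$. Fix a pendant vertex $v$ of $H$ with unique neighbour $u$, and set $H':=H-v$, a connected graph on $k:=|V(H)|-1\ge 2$ vertices.

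I take $G$ as follows: partition the $n$ vertices into $\lfloor n/k\rfloor$ groups of size $k$ (with at most $k-1$ leftover vertices), place a properly rainbow-coloured copy of $K_k$ on each group (so each of its $\binom{k}{2}$ edges gets a unique colour), and arrange that different groups use pairwise disjoint colour palettes, with every remaining pair left as a non-edge. This graph has at most $\lfloor n/k\rfloor\binom{k}{2}=O(n)$ edges, and contains no copy of $H$ whatsoever: every component of $G$ has at most $k$ vertices while $H$ is connected on $k+1$ vertices.

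The main step is verifying that every non-edge $xy$ between two distinct groups $G_i,G_j$ is good. For such $xy$ and any colour $c$, the disjointness of palettes forces $c$ to lie outside at least one of them, say the palette of $G_i$. Since $G_i$ induces a $K_k$ on $|V(H')|=k$ vertices, any bijection $\phi\colon V(H')\to V(G_i)$ sending $u$ to $x$ is automatically an embedding of $H'$ in $G_i$ with $x$ playing the role of $u$; combining it with the new edge $xy$ (in the role of the pendant edge $uv$) produces a copy of $H$ whose $|E(H')|$ old edges are coloured with distinct colours from $G_i$'s palette while the new edge has colour $c$ outside that palette, so the copy is rainbow. Hence every bad non-edge of $G$ involves at least one of the $\le k-1$ leftover vertices, of which there are at most $(k-1)(n-1)+\binom{k-1}{2}=O(n)$, and \Cref{absorb} finishes the proof. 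The main obstacle is precisely this cross-group verification; both design choices are essential — the completeness of each $K_k$ makes an embedding of $H'$ at any chosen vertex trivially available (any vertex bijection is an embedding), while disjointness of palettes guarantees that any given colour $c$ misses the palette of at least one of the two groups.
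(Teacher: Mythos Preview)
Your proof is correct and is essentially the same as the paper's: both take a vertex-disjoint union of rainbow copies of $K_{|V(H)|-1}$ with globally distinct colours, observe that no component is large enough to host $H$, and use the pendant edge to complete a rainbow $H$ across any two blocks. You spell out more carefully the step the paper leaves implicit, namely that for a cross-block non-edge $xy$ in colour $c$, one of the two incident blocks has a palette avoiding $c$ and can therefore host $H-v$.
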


\begin{proof}  
	Write $k = |V(H)|$ and $n=q(k-1)+r$, where $0\leq r< k-1$. Define an edge-coloured graph $G$ on $n$ vertices as follows: take $q$ disjoint copies of $K_{k-1}$ and let $\chi$ be a rainbow colouring of them. Finally, add $r$ isolated vertices.
	Note that $G$ has $n$ vertices and $q\binom{k-1}{2} = (n-r)\left(\frac{k-2}{2}\right) = O(n)$ edges. Clearly $G$ does not contain a rainbow copy of $H$. Moreover, since $\delta(H)=1$, if any non-edge in any colour is added between two distinct copies of $K_{k-1}$ in $G$, then a rainbow copy of $H$ is created. Therefore, since $G$ has at most $q(k-1)r + \binom{r}{2} =(n-r)r + \binom{r}{2} =O(n)$ other non-edges, \Cref{absorb} implies that $\rsat(n,H)=O(n)$.
\end{proof}

\section{The construction} 
\label{Sec:Constr}

In this section, we give the construction that lies at the heart of our proof of \Cref{linear rainbow}. Given a graph $H$ and an edge $e\in E(H)$, we create an edge-coloured graph on $n$ vertices with $O(n)$ edges such that the addition of (almost) any edge creates a rainbow copy of $H$. The main work in the proof of \Cref{linear rainbow} will be to show that this graph does not contain a rainbow copy of $H$, which will be done in Section~\ref{Sec:no rainbow}.

\begin{constr}
\label{construction}

Let $H$ be a connected graph on at least three vertices, and let $xy\in E(H)$. 
Write $S := N_H(x) \cap N_H(y)$ and let $T$ be the set of edges in $H$ with one endpoint in $S$ and the other in $\{x,y\}$. 
Define $H'$ to be the graph obtained from $H$ by contracting the edge $xy$ and replacing any multi-edges by single edges. We label the vertices of $H'$ as in $H$, with the single vertex obtained from contracting $xy$ labelled by $\xy$. Let $T'$ be the set of edges in $H'$ between $\xy$ and $S$, and write $H'' := H' \setminus T'$.

For an integer $m \geq 2$, define $F = F(m)$ to be the graph obtained from $H'$ by replacing the vertex $\xy$ with $m$ duplicates of itself, denoted $v_1, v_2,\ldots, v_m$. Write $M := \{v_1, v_2, \ldots, v_m\}$ and label the vertices in $M$ by $\xy$. Label the remaining vertices as in $H$. This means $F$ has $m$ vertices labelled $\xy$ and one vertex labelled $v$ for every $v \in V(H) \setminus \{x, y\}$.

Given any integers $m \geq 2$ and $r \geq \max\{|E(H'')| + 1,2\}$, define the graph $G := G^{H}_{xy}(m,r)$ as follows. Start with $|E(H'')|$ disjoint copies of $F(m)$, indexed by the edges in $H''$ as $F_{e_1}, \dots, F_{e_{|E(H'')|}}$. To these we add $r - |E(H'')|$ copies of $F(m)$, which we index as $F_1,\allowbreak \dots,\allowbreak F_{r - |E(H'')|}$. Finally, we obtain $G$ by identifying the vertices corresponding to $v_i$ for each $i$ in turn. That is, for each $i = 1, \dots, m$, we replace the $r$ vertices $u_1, \dots, u_r$ corresponding to $v_i$ with one vertex and connect this vertex to the vertices in $\bigcup_{j=1}^r N(u_j)$.  Note that the graph $G$ has $r$ copies of each vertex in $V(H') \setminus \{\xy\}$ and $m$ copies of the vertex $\xy$, and that the labelling of the vertices naturally defines a labelling of the edges, where an edge in $G$ whose vertices are labelled $u$ and $v$ is labelled $uv$. By construction, $uv$ is an edge in $H'$.

We now define a colouring $\chi$ of $G$, which we will later show does not contain a rainbow copy of $H$. Let $\chi_{0}$ be a rainbow colouring of $H''$ and fix a colour that is not in $\chi_0$, say black. We extend $\chi_{0}$ to a colouring of $H'$ by colouring the edges in $T'$ with a colour $\star$. Let an edge $uv$ in $G$ be coloured by $\chi_0(e)$, where $e$ is the colour of the edge in $H'$ matching the label of $uv$. For each edge $e$ in $H''$, recolour the edge corresponding to $e$ in $F_e$ with the colour black, and finally recolour each edge coloured with $\star$ with unique colours that are not used elsewhere in $G$. 

\begin{figure}[htbp]
\centering
\input{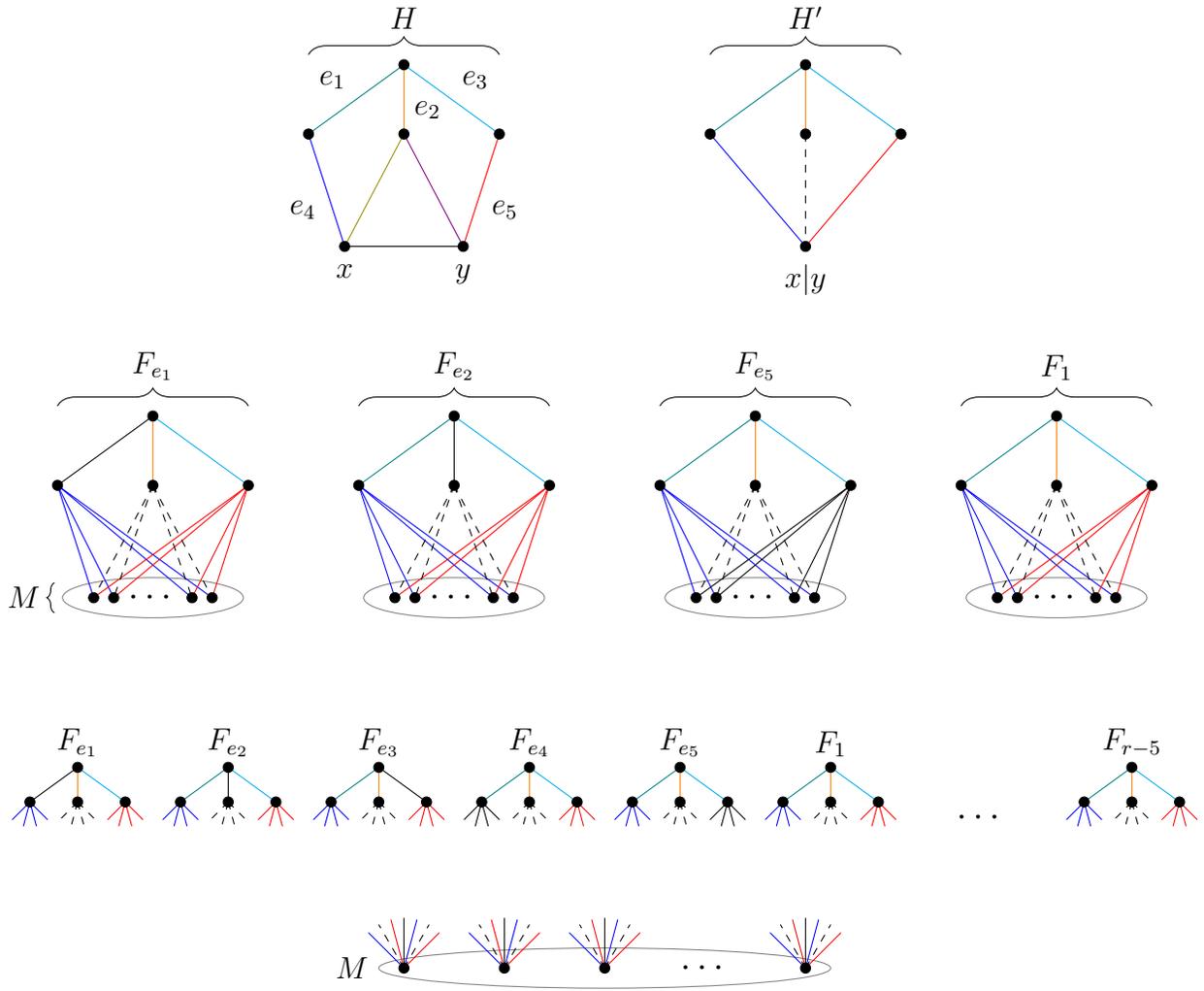}
\caption{An example of the construction of the graph $G_{xy}^{H}(m,r)$. The dashed edge represents edges which are originally coloured $\star$ and will be coloured with unique colours by $\chi$. The colour black can be seen replacing other colours according to the index of the copy of $F$.}\label{fig:con}
\end{figure}

\end{constr}

\Cref{construction} was inspired by the construction used in \cite{GLP} to prove \Cref{GLP construction}. In fact, when the edge $xy$ is not contained in a triangle, $N_H(x)\cap N_H(y)=\emptyset$, and the graph described in \Cref{construction} is equivalent to the construction in \cite{GLP}.

See Figure~\ref{fig:con} for an example of how \Cref{construction} works. In what follows, we will use $G_{xy}^H(n)$ to refer to the edge-coloured graph $(G_{xy}^H(m, r),\chi)$, where $m = n - r(|V(H)| - 2)$ and $r = \max\{|E(H'')| + 1,2\}$; so $G_{xy}^H(n)$ has $n$ vertices. Observe that the addition of any non-edge in any colour within $M$ creates a rainbow copy of $H$. Since $r = O(1)$ and $m = n - O(1)$, there are at most $O(n)$ bad non-edges in $G_{xy}^H(n)$. Therefore, if $G_{xy}^H(n)$ contains no rainbow copy of $H$, then  \Cref{absorb} will yield $\rsat(n,H) = O(n)$, as required.

\section{Completing the proof of Theorem \ref{linear rainbow}} 
\label{Sec:no rainbow}
The goal of this section is to show that $G_{xy}^H$ contains no rainbow copy of $H$. 

\begin{prop}
\label{no rainbow H generalised}
    Let $H$ be a non-empty connected graph. For any edge $xy\in E(H)$ that is contained in a triangle and all integers $m$ and $r$, the graph $G_{xy}^H(m,r)$ does not contain a rainbow copy of $H$.
\end{prop}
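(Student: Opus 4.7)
The plan is: assume for contradiction that $\phi\colon H\hookrightarrow G^H_{xy}(m,r)$ is a rainbow embedding. Introduce the label map $\psi=\tau\circ\phi\colon V(H)\to V(H')$ and the set $X=\phi^{-1}(M)=\psi^{-1}(\xy)$. Since $H'$ has no loops, $\psi(u)\neq\psi(v)$ for every $uv\in E(H)$; hence $X$ is independent in $H$, and because $xy\in E(H)$, $|X\cap\{x,y\}|\le 1$.

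I would then perform a colour count. Each edge of $\phi(H)$ inherits an $H'$-label. Edges carrying a $T'$-label are pairwise colour-disjoint, since by construction each $T'$-edge of $G$ received its own unique colour. For $\ell\in E(H'')$ the $\ell$-labelled edges of $G$ are coloured $\chi_0(\ell)$, except those lying inside $F_\ell$, which are black. The rainbow condition therefore allows at most one $\phi$-edge of colour $\chi_0(\ell)$ per label $\ell$ and at most one black $\phi$-edge in total, giving
\[
|E_{H''}(\phi)|\le |E(H'')|+1.
\]
A direct count from $|E(H)|=|E(H[V^*])|+|N_H(x)|+|N_H(y)|-1$ and $|E(H'')|=|E(H[V^*])|+|N_{H'}(\xy)|-|S|$ yields $|E(H)|-|E(H'')|=2|S|+1$, so
\[
|E_{T'}(\phi)|\ge 2|S|.
\]

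Next comes a key injectivity observation: for every connected component $C$ of $H-X$, the vertices of $\phi(V(C))$ lie in a single $F$-copy (non-$M$ vertices in different $F$-copies are non-adjacent in $G$), and distinct non-$M$ vertices of an $F$-copy carry distinct labels. Hence $\psi|_C$ is an injective map into $V^*$, forcing $|V(C)|\le|V(H)|-2$. In the case $X=\emptyset$, $H-X=H$ is a single component, so $\psi\colon V(H)\to V^*$ would be injective, which is impossible as $|V(H)|>|V^*|$ (alternatively, $|E_{T'}(\phi)|=0<2|S|$). In the case $X\neq\emptyset$ I would decompose
\[
|E_{T'}(\phi)|=\sum_{z\in S}e_H\bigl(X,\psi^{-1}(z)\bigr)
\]
and bound it from above using (i) $|\psi^{-1}(z)\cap V(C)|\le 1$ for every component $C$ (injectivity), (ii) the homomorphism constraint $\psi(N_H(v))\subseteq N_{H'}(\xy)$ for every $v\in X$, and (iii) the triangle $xyz$ with $z\in S$, which pins down where $S$ sits among the components of $H-X$ (in particular $S$ cannot meet $X$ when $x$ or $y$ does). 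Together these should force $|E_{T'}(\phi)|<2|S|$, contradicting the lower bound.

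The main obstacle is the case $X\neq\emptyset$: one has to interleave the injectivity of $\psi|_C$ with the homomorphism constraints attached to each $v\in X$ and with the triangle structure, while carrying out the sub-case split on whether $X\cap\{x,y\}$ is empty or a singleton. The bookkeeping of the black edges — which behave differently for $H[V^*]$-labels (one black edge per $F_e$) and for $\xy$-labels (all $m$ parallel $\xy z$-edges of $F_{\xy z}$ are black) — is where the delicate interplay between construction, colouring and structure of $H$ must be carefully managed.
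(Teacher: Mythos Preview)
Your proposal is a plan rather than a proof: the decisive step---showing $|E_{T'}(\phi)|<2|S|$ when $X\neq\emptyset$---is asserted (``together these should force'') but never carried out. More seriously, the target inequality is the wrong one to aim for. Your colour count $|E_{H''}(\phi)|\le |E(H'')|+1$ is valid but loses exactly one unit of slack (the ``$+1$'' for black), and this slack cannot be recovered on the $T'$-side. Indeed, when $|X|=2$ and every label in $V(H)\setminus\{x,y\}$ is hit once, one has only the bound $|E_{T'}(\phi)|\le 2|S|$, with equality possible in principle; so your intended strict inequality $|E_{T'}(\phi)|<2|S|$ is not what produces the contradiction. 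The paper obtains the contradiction on the $H''$-side instead: once one knows that every label $v\in V(H)\setminus\{x,y\}$ is used (so $|X|\le 2$), then for each $e'\in E(H'')$ all $e'$-labelled edges of the rainbow copy lie in a single $F$-block and hence share one colour, giving at most $|E(H'')|$ (not $|E(H'')|+1$) colours on that side. That single unit is what closes the count.

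The paper's route to this label-surjectivity is genuinely different from anything in your outline: it takes a minimal counterexample $H$, and argues (Claim~4.2) that if some label $v$ were unused then either a colour-count already fails (when $v$ is the unique common neighbour of $x,y$), or deleting all $v$-labelled vertices yields $G_{xy}^{H\setminus v}(m,r)$ and contradicts minimality (when $H\setminus v$ is connected), or a connectivity argument through a farthest non-cut vertex gives a contradiction (when $v$ is a cut vertex). None of your items (i)--(iii) supplies this; in particular, your injectivity of $\psi|_C$ on components of $H-X$ controls label multiplicities within components but says nothing about labels being \emph{hit at all}, which is what is needed to force $|X|\le 2$ and to tighten the $H''$-bound. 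Without an argument of this kind (or an alternative device that recovers the lost unit), the plan does not close.
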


Before proving this, we show why it implies \Cref{linear rainbow} (restated here for convenience).

\thmLinear*

\begin{proof}[Proof of \Cref{linear rainbow}]  
	First, note that by \Cref{disconnect}, it suffices to prove that every connected graph $H$ satisfies $\rsat(n, H) = O(n)$. 
	Let $H$ be a non-empty connected graph. If $H$ has a triangle, let $xy$ be an edge contained in a triangle, and write $G := G_{xy}^H(n)$. Then $|E(G)| = O(n)$, and $G$ has no rainbow copies of $H$ by \Cref{no rainbow H generalised}. Moreover, as described at the end of Section \ref{Sec:Constr}, for all but at most $O(n)$ non-edges $e$, the addition of $e$ in any colour creates a rainbow copy of $H$. In particular, $\rsat(n, H) = O(n)$, as required.
	Otherwise, if $H$ is triangle-free, then $\rsat(n, H) = O(n)$, using either \Cref{pendant} if $H$ has a pendant edge or Theorem~\ref{GLP construction} if $H$ has a non-pendant edge.
\end{proof}

\begin{proof}[Proof of \Cref{no rainbow H generalised}]

	Suppose for contradiction that there exists a connected graph $H$ and an edge $xy\in E(H)$ contained in a triangle such that $G_{xy}^H(m,r)$ contains a rainbow copy $H_\R$ of $H$, for some integers $m, r$ with $m \geq 2$ and $r \geq \max\{|E(H'')| + 1,2\}$.
	Take $H$ to have the minimal number of vertices with respect to these properties, and fix appropriate $m$ and $r$ such that $G := G_{xy}^H(m, r)$ has a rainbow copy of $H$.
	Recall that the vertices of $G$ are labelled by vertices in $H'$ (the graph obtained from $H$ by contracting $xy$), $M$ is the set of vertices labelled $\xy$, and that the edges of $G$ are labelled by edges in $H'$.

Since $H_\R$ is connected it must contain at least one vertex in $M$, else it would be contained entirely within a copy of $H\setminus\{x,y\}$, which is impossible as $|V(H_\R)| = |V(H)|$. 

\begin{claim} \label{claim: use every vertex}
    For every $v \in V(H)\setminus\{x,y\}$, the rainbow copy $H_\R$ uses at least one vertex labelled $v$.
\end{claim}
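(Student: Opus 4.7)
The plan is to argue by contradiction, using the minimality of $|V(H)|$ fixed at the start of the proof: if some $v_0 \in V(H) \setminus \{x, y\}$ is not used as a label by any vertex of $H_\R$, I will exhibit a smaller graph for which the analogous statement fails. The natural candidate is $K := H - v_0$, which has $|V(H)| - 1$ vertices and still contains the edge $xy$. Provided that $K$ is connected and $xy$ still lies in a triangle of $K$, the minimality of $H$ forbids $G_{xy}^K(m, r)$ from containing a rainbow copy of $K$, so it suffices to exhibit such a rainbow copy.

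To do this, let $V_{v_0}$ denote the set of vertices of $G = G_{xy}^H(m, r)$ labelled $v_0$. Since $H_\R$ uses no vertex of $V_{v_0}$, we have $H_\R \subseteq G - V_{v_0}$. Inspecting the construction, deleting the unique $v_0$-labelled vertex from each copy $F(m)$ built from $H'$ produces a copy of $F(m)$ built from $K' = H' - v_0$. Using that $K'' = H'' - v_0$, the indexed copies $F_e$ of $G$ with $e \in E(K'')$ retain their black edge and play precisely the role of the indexed copies of $G_{xy}^K(m, r)$, while copies $F_e$ with $e \in E(H'') \setminus E(K'')$ (those where $e$ is incident to $v_0$) lose their black edge and may be treated as extra copies $F_j$ of $G_{xy}^K(m,r)$. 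Choosing the rainbow colouring $\chi_0^K$ of $K''$ to be the restriction of $\chi_0$ to $E(K'')$, the colourings agree. Thus the restriction of $H_\R$ to $V(H_\R) \setminus \{\phi(v_0)\}$, where $\phi: V(H) \to V(H_\R)$ is the isomorphism defining $H_\R$, is a rainbow copy of $K$ in $G_{xy}^K(m, r)$, contradicting the minimality of $H$.

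The main obstacle is the degenerate case in which $v_0$ is the unique common neighbour of $x$ and $y$ in $H$ (that is, $S = \{v_0\}$): then $K$ no longer contains $xy$ in a triangle and the minimality hypothesis does not apply. In this case a direct argument is needed. I expect this to come from analysing the triangle $\phi(x)\phi(y)\phi(v_0)$ in $H_\R$: since $v_0$ is not used by $H_\R$, the label $\psi(v_0) := \ell(\phi(v_0))$ lies in $V(H') \setminus \{v_0\}$, so the vertex labels of this triangle form a triangle in $H'$ not of the form $\xy v_0$; the rainbow constraint combined with the tight structure of the colouring (the uniqueness of the colours on $T'$-edges and the fact that at most one edge of $H_\R$ may be black) should then force a contradiction. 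A secondary and essentially routine issue is that $K$ may be disconnected if $v_0$ is a cut vertex of $H$, which is handled by an analogous argument applied to the component of $K$ containing $xy$.
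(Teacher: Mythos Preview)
Your skeleton is the paper's: delete all $v_0$-labelled vertices and invoke minimality on $H-v_0$ when that graph is still connected with $xy$ in a triangle. That part is fine (and you don't even need to pass to $H_\R\setminus\{\phi(v_0)\}$: since $H_\R$ sits in $G-V_{v_0}\cong G_{xy}^{H-v_0}(m,r)$, it already contains a rainbow $H-v_0$).

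The genuine gap is the ``unique common neighbour'' case. Your plan there --- analysing the single triangle $\phi(x)\phi(y)\phi(v_0)$ and hoping the labels and the uniqueness of the $T'$-colours force a clash --- does not lead anywhere: those three edges may carry any three colours from $\chi_0\cup\{\text{black}\}$ with no local contradiction. The paper's argument is global and purely by counting colours. If $S=\{v_0\}$ and no vertex labelled $v_0$ is used, then no edge labelled by $T'$ appears in $H_\R$, so none of the unique $\star$-colours appear; the only colours available to $H_\R$ are the $|E(H'')|$ colours of $\chi_0$ together with black, i.e.\ at most $|E(H'')|+1=|E(H)|-|T|=|E(H)|-2$ colours in total. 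A rainbow $H$ needs $|E(H)|$ colours, contradiction. You should replace your triangle analysis by this count.

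For the cut-vertex case your ``analogous argument applied to the component $K_0$ containing $xy$'' is salvageable but not quite routine as stated: the image $\phi(V(K_0))$ need not land in the $K_0$-part of $G-V_{v_0}$. What makes it work is the fact, already established before the claim, that $H_\R$ meets $M$; since $H_\R$ is connected and $G-V_{v_0}$ decomposes as $G_{xy}^{K_0}(m,r)$ (the component containing $M$) together with disconnected copies of the other components of $H-v_0$, all of $H_\R$ lies in $G_{xy}^{K_0}(m,r)$, and then minimality applied to $K_0$ finishes. The paper instead argues the contrapositive: it picks a non-cut vertex $u$ in a component of $H-v_0$ \emph{not} containing $x,y$, notes (from the non-cut case) that $H_\R$ uses some vertex $u'$ labelled $u$, and observes that any path in $G$ from $M$ to $u'$ must cross a $v_0$-labelled vertex --- contradicting $H_\R\subseteq G-V_{v_0}$. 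Both routes rest on the same separation property of $G-V_{v_0}$, but you should make that step explicit.
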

\begin{proof}
    Suppose for contradiction that $v$ is a vertex in $V(H)\setminus\{x,y\}$ such that $H_\R$ uses no vertex labelled $v$. 

	Suppose first that $v$ is the unique common neighbour of $x$ and $y$ in $H$. Then, since $H_\R$ uses no vertex labelled $v$, it uses at most $|E(H)| - 2$ colours: at most $|E(H'')| = |E(H)| - |T| - 1$ colours used by $\chi_0$, and possibly also black. This is a contradiction, as a rainbow copy of $H$ requires $|E(H)|$ colours.
    
	Next, suppose that the graph $H^v := H\setminus \{v\}$ is connected. By the previous paragraph, we may (and will) assume that $v$ is not the unique common neighbour of $x$ and $y$. Consider the edge-coloured graph $G^*$ obtained from $G = G_{xy}^H(m,r)$ by the deletion of every vertex labelled $v$. As $H_\R$ contains no vertex labelled $v$, it is present in $G^*$, and, in particular, $G^*$ contains a rainbow copy of $H^v$. Observe that $G^* = (G_{xy}^{H^v}(m,r), \chi_0')$, where $\chi_0'$ is the restriction of $\chi_0$ to $(H^v)''$ and $r \ge \max\{|E(H^v)| +1, 2\}$. Since $v$ is not the unique common neighbour of $x$ and $y$, the edge $xy$ is in a triangle in $H^v$. Therefore, since $H^v$ is connected, by minimality of $H$ we know that $G_{xy}^{H^v}(m,r)$ contains no rainbow copy of $H^v$. Hence, we obtain a contradiction. 
    
    Therefore, we may (and will) assume that, for every non-cut vertex $u$ in $H$, the rainbow copy $H_\R$ contains a vertex labelled $u$. In particular, $v$ is a cut vertex in $H$. Note that $x$ and $y$ are in the same component of $H^v$ since $xy\in E(H)$ and $v \in V(H)\setminus\{x,y\}$. Let $C$ be a component of $H^v$ not containing $\{x,y\}$. Take $u \in C$ of maximum distance from $v$. Note that $u$ is not a cut vertex of $H$, and thus, by assumption, there is a vertex $u'$ in $H_\R$ labelled $u$.
    Since $H_\R$ is connected and contains at least one vertex in $M$, there is a path in $H_\R$ from $M$ to $u'$. However, by our choice of $C$, any such path passes through a vertex labelled $v$, which is a contradiction. 
\end{proof}

Recall that $T$ is the set of edges in $H$ with one endpoint in $S = N_H(x) \cap N_H(y)$ and the other in $\{x,y\}$, and that $T'$ is the set of edges in $H'$ between $\xy$ and $S$. Note that $|S| = |T'| = \frac{|T|}{2}\geq 1$ since $xy$ is contained in a triangle. By Claim \ref{claim: use every vertex}, we see that $|H_\R \cap M| \le 2$.  We will obtain a contradiction by counting the edges in $H_\R$ (via two cases). 

First, consider the case where $|H_\R \cap M| = 2$. By Claim~\ref{claim: use every vertex}, $H_\R$ contains exactly one vertex labelled $v$, for each $v \in V(H)\setminus\{x,y\}$. Hence there are at most $2|T'|$ edges in $H_\R$ with a label in $T'$. Now, consider an edge $uv \in E(H') \setminus T'$, where $u \neq \xy$ without loss of generality. Since there is exactly one vertex labelled $u$, all edges labelled $uv$ belong to the same copy of $F$, and thus have the same colour. It follows that the edges of $H_\R$ are coloured using at most $|E(H')| - |T'| + 2|T'| \leq |E(H)| - 1$ colours, which contradicts the assumption that $H_\R$ is a rainbow copy of $H$.

Now, consider the case where $|H_\R \cap M| = 1$. In this case, by Claim~\ref{claim: use every vertex} there is exactly one $v \in V(H)\setminus\{x,y\}$ that labels two vertices in $H_\R$, and every other vertex in $V(H')$ appears once as a label. 
We claim that for every edge $e\in E(H')$, except possibly for $v \xy$ (if this is an edge), all edges labelled $e$ have the same colour. Indeed, if $e$ does not contain $v$ then the claim clearly follows, as there is at most one edge labelled $e$. Otherwise, $e = uv$ with $u \neq \xy$, and thus, since $e \in E(H') \setminus T'$ and all edges labelled $e$ are in the same copy of $F$, they all receive the same colour. 
Note that, if $v \xy$ is an edge in $H'$, then there are at most two edges in $H_\R$ with this label. Altogether, the edges of $H_\R$ see at most $|E(H')| + 1 = |E(H)| - |T'| < |E(H)|$ colours, which contradicts $H_\R$ being a rainbow copy of $H$.
\end{proof}

\section{Rainbow saturation for cliques}
\label{sec:Cliques}

The purpose of this section is to prove Theorem~\ref{clique}, restated here for convenience.

\thmClique*

We begin by proving the following lemma.

\begin{lem} \label{claim:s}
	For every $s \ge 3$, there is an edge-coloured graph $(H,\chi_H)$ on $\binom{s}{2} + s$ vertices with the following properties:
	\begin{enumerate}[label = \rm(\arabic*)]
		\item \label{itm:a}
			the largest rainbow clique in $H$ has $\binom{s}{2}+1$ vertices,
		\item \label{itm:b}
			for every $v \in V(H)$, there is a rainbow clique in $H \setminus \{v\}$ on $\binom{s}{2}+1$ vertices,
		\item \label{itm:c}
			for every colour $c$, there is a rainbow clique with $\binom{s}{2}+1$ vertices containing no edge coloured $c$. 
	\end{enumerate}
\end{lem}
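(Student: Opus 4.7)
The plan is to exhibit an explicit edge-coloured complete graph. Set $V_0 := \{x_e : e \in \binom{[s]}{2}\}$ and $V' := \{v_i : i \in [s]\}$, so that $|V_0| = \binom{s}{2}$ and $|V'| = s$, and let $H$ be the complete graph on $V(H) := V_0 \cup V'$. Define $\chi_H$ as follows: for each $e = \{i,j\} \in \binom{[s]}{2}$, assign a common colour $c_e$ to the two edges $v_i x_e$ and $v_j x_e$; give every other edge of $H$ its own unique colour.

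To verify \ref{itm:a}, take a rainbow clique $C$ and set $I := \{i \in [s] : v_i \in C\}$. For every pair $\{i,j\} \subseteq I$, the edges $v_i x_{ij}$ and $v_j x_{ij}$ carry the common colour $c_{ij}$, so $x_{ij} \notin C$. Therefore
\[
|C| \;\le\; |I| + \binom{s}{2} - \binom{|I|}{2} \;\le\; \binom{s}{2} + 1,
\]
using $k - \binom{k}{2} \le 1$ for every integer $k \ge 0$. Tightness follows since $V_0 \cup \{v_1\}$ is rainbow of size $\binom{s}{2}+1$: within it, each colour $c_{1j}$ appears on only one edge and all remaining edge-colours are unique.

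For \ref{itm:b}, split on $v$. If $v = v_k$, take $C := V_0 \cup \{v_\ell\}$ for any $\ell \ne k$. If $v = x_e$ with $e = \{i,j\}$, take $C := (V_0 \setminus \{x_e\}) \cup \{v_i, v_j\}$. In both cases $|C| = \binom{s}{2} + 1$, and rainbow-ness is checked by inspection, the potential $c_{ij}$-conflict in the second case being neutralised precisely because $x_{ij}$ has been removed. For \ref{itm:c}, case-split on the colour $c$. If $c = c_e$ with $e = \{i,j\}$, take $V_0 \cup \{v_k\}$ for some $k \in [s] \setminus \{i,j\}$, which exists because $s \ge 3$; then neither $v_i$ nor $v_j$ is in $C$, so no edge of $C$ has colour $c_e$. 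Otherwise $c$ is a unique colour on a single edge $e'$, and one of the rainbow cliques produced in \ref{itm:b} can be arranged to omit an endpoint of $e'$.

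The only real obstacle is calibrating the colour identifications to satisfy all three properties simultaneously. Pairing the two edges $v_i x_{ij}$ and $v_j x_{ij}$ for each $e = \{i,j\}$ is just strong enough to force the $\binom{|I|}{2}$ exclusions from $V_0$ that cap the rainbow clique number at $\binom{s}{2} + 1$, and just weak enough that many rainbow cliques of this extremal size remain, providing the flexibility required to avoid any prescribed vertex or colour. Once this design choice is made, the verifications reduce to routine edge-colour accounting.
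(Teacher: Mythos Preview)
Your construction is identical to the paper's (with $V'$ playing the role of $S$ and $V_0$ the role of $S^{(2)}$), and your verifications of \ref{itm:a}--\ref{itm:c} follow the same counting and case analysis, with only cosmetic differences: for the tightness example in \ref{itm:a} you use $V_0 \cup \{v_1\}$ where the paper uses $\{x,y\} \cup (S^{(2)}\setminus\{\{x,y\}\})$, and for \ref{itm:c} you split into two cases directly while the paper reduces uniformly to \ref{itm:b} via the observation that every colour class has a common vertex. The argument is correct and essentially the same as the paper's.
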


\begin{proof}
	Let $S$ be a set of size $s$. Let $H$ be a complete graph on vertices $S \cup S^{(2)}$, where $S^{(2)}=\{\{x,y\} \subseteq S : x \neq y\}$ is the set of pairs of distinct elements from $S$. For each pair $\{x, y\} \in S^{(2)}$, colour the edges $(x, \{x, y\})$ and $(y, \{x, y\})$ with the same colour, picking a different colour for each pair. The remaining edges receive a rainbow colouring, with colours distinct from those used so far. 
	
	We first show that the largest rainbow clique in $H$ has size $\binom{s}{2}+1$. Suppose that $T$ is a rainbow clique, and write $t := |T \cap S|$. For every two distinct vertices $x, y \in T \cap S$, the pair $\{x, y\}$ is not in $T$, since the edges $(x, \{x,y\})$ and $(y, \{x, y\})$ have the same colour. It follows that $|T| \le t + \binom{s}{2} - \binom{t}{2} \le \binom{s}{2} + 1$, using the fact that $\binom{t}{2} \ge t-1$ for $t \ge 0$. This shows that $H$ has no rainbow cliques of size at least $\binom{s}{2} + 2$. Given distinct vertices $x, y \in S$, let  $T(x, y) := \{x, y\} \cup (S^{(2)} \setminus \{\{x, y\}\})$. It is not difficult to see that $T(x, y)$ is a rainbow clique of size $\binom{s}{2} + 1$, and hence a maximum rainbow clique. This proves \ref{itm:a}.
	
	For \ref{itm:b}, let $v \in V(H)$. If $v \in S$, then for any $x, y \in S\setminus \{v\}$, the clique $T(x,y)$ avoids $v$ (using $s \ge 3$). If $v \in S^{(2)}$, then $v = \{x,y\}$ for some $x \not= y \in S$, and thus the clique $T(x,y)$ avoids $v$. This proves \ref{itm:b}. 
	
	For \ref{itm:c}, let $c$ be a colour that is used on $H$ (if $c$ does not appear on $H$, then \ref{itm:c} holds trivially). Since every colour class is either an edge or a path of length $2$, there is a vertex $v$ which is incident with all edges of colour $c$. By \ref{itm:b}, there is a maximum rainbow clique avoiding $v$ and thus $c$. This proves \ref{itm:c}.
\end{proof}

We now complete the proof of Theorem~\ref{clique}. 

\begin{proof}[Proof of Theorem~\ref{clique}]
	Given $r \ge 10$, let $s$ be the largest integer such that $r \ge \binom{s}{2} + 1$ (so $r \le \binom{s+1}{2}$). Write $t := r - \binom{s}{2} - 1$. Notice that we may take $n$ to be sufficiently large.
	
	Let $(H,\chi_H)$ be an edge coloured graph as in \Cref{claim:s} (applied with $s$), and let $T_1$, $T_2$ and $U$ be pairwise disjoint sets of sizes $t$, $t$, and $n - 2t - |V(H)|$ respectively which are disjoint from $V(H)$. 
	Let $G$ be a graph on vertex set $V(H) \cup U \cup T_1 \cup T_2$ with edge set $E(G):=\{uv: \{u,v\} \not\subseteq U \text{ and } uv \notin T_1 \times T_2\}$. We define a colouring $\chi$ on $E(G)$ such that $\chi(e) = \chi_H(e)$ for each $e \in E(H)$, and the edges of $E(G)\setminus E(H)$ receive a rainbow colouring with colours distinct from those used by $\chi_H$. Note that the vertices of each $T_i$ induce a rainbow clique of size $t$.
	
	We claim that $G$ contains no rainbow copy of $K_{r+2}$. Indeed, suppose $W$ is a rainbow clique in $G$. Then $W$ contains at most one vertex from $U$ (as $U$ is an independent set), at most $t$ vertices from $T_1 \cup T_2$ (as no edges exist between $T_1$ and $T_2$), and at most $\binom{s}{2} + 1$ vertices from $H$ (by the choice of $\chi_H$). Thus, $|W| \le \binom{s}{2} + 1 + t + 1 = r+1$, as claimed.
	
	Next, we show that the addition of any non-edge $xy$ in $U$ in any colour creates a rainbow copy of $K_{r+2}$. Suppose that $xy$ is coloured red, and denote by $G'$ the graph obtained from $G$ by adding $xy$ in red. 
	Then exactly one of the following holds: no edge in $G[V(H) \cup \{x,y\}]$ is red; exactly one of the edges from $\{x, y\}$ to $V(H)$ is red; red is used in $H$. 
	Hence, by the properties of $H$ given by \Cref{claim:s} and by definition of $G$, there is a maximum rainbow clique $W$ in $H$ such that $G[W \cup \{x, y\}]$ does not contain any red edges. Thus $W \cup \{x, y\}$ is a rainbow clique of size $\binom{s}{2} + 3 = r-t+2$ in $G'$. Since either $T_1$ or $T_2$ is not incident to any red edge, there is $i \in \{1,2\}$ for which $W \cup T_i \cup \{x, y\}$ is a rainbow clique of size $r+2$, as required.
	
	Now, as $r \le \binom{s+1}{2}$, we have $t \le \binom{s+1}{2} - \binom{s}{2} - 1 = s - 1$. Also, since $r \ge \binom{s}{2}$, we have $s - 1 \le \sqrt{2r}$. Thus, thinking of $r$ as fixed, we have
	\begin{align*}
		|E(G)| & = (|V(H)| + 2t)(n - |V(H)| - 2t) + |E(H)|+ 2\binom{t}{2} + 2t|V(H)| \\
		& = (|V(H)| + 2t)(n - |V(H)| - 2t) + O(1) \\
		& = \left(\binom{s}{2} + s + 2t\right) \cdot n + O(1) \\
		& = (r + s + t - 1) \cdot n + O(1)\\
        & \le (r + 2s - 2) \cdot n + O(1) \\ 
        & \le (r + 2\sqrt{2r}) \cdot n + O(1).
    \end{align*}
    Observe that $G$ has at most $t^2 = O(1)$ bad non-edges, namely the edges $uv$ such that $u \in T_1$ and $v \in T_2$, so applying \Cref{absorb} yields that $\rsat(n,K_{r+2}) \le (r + 2\sqrt{2r}) n + O(1).$
\end{proof}

\section{Conclusion and open problems}
\label{sec:Future}

Our main theorem shows that for any graph $H$ the rainbow saturation number $\rsat(n,H)$ is linear in $n$. Given this, it is natural to ask the following.

\begin{qn}\label{ques:lim}
    For every graph $H$, does there exist a constant $c = c(H)$ such that
    $$\rsat(n,H) = (c(H) + o(1))n?$$
\end{qn}

Analogous questions have been considered for saturation and the related notion of weak saturation. The limit $\lim_{n \rightarrow \infty} \frac{\sat(n,H)}{n}$ was conjectured to exist by Tuza; see~\cite{Tuza,TuzaExtremal}. Although some progress has been made towards Tuza's conjecture (see~\cite{Pikhurko,TT}), it remains open. 

Another natural direction would be to generalise the notion of weak saturation to edge-coloured graphs. A subgraph $G$ of $F$ is said to be \emph{weakly $(F, H)$-saturated} if the edges of $E(F) \setminus E(G)$ can be added to $G$, one edge at a time, in such a way that every added edge creates a new copy of $H$.  The minimum number of edges in a weakly $(F, H)$-saturated graph is known as the \emph{weak saturation number}, and is denoted $\wsat(F, H)$. When $F=K_n$, we write $\wsat(n, H)$ instead of $\wsat(K_n, H)$.  

We say an edge-coloured subgraph $(G,\chi)$ of $F$ is \emph{weakly} $H$\emph{-rainbow saturated} if there exists an ordering $e_1,...,e_m$ of $E(F)\setminus E(G)$ such that, for any list $c_1,...,c_m$ of distinct colours from $\N$, the non-edges $e_i$ in colour $c_i$ can be added to $G$, one at a time, so that every added edge creates a new rainbow copy of $H$. The \emph{weak rainbow saturation number} of $H$, denoted by $\rwsat(F,H)$, is the minimum number of edges in a weakly $H$-rainbow saturated graph. When $F=K_n$, we write $\rwsat(n, H)$ instead of $\rwsat(K_n, H)$. 

Note that we require the collection of added edges to receive distinct colours. In particular, we wish to exclude the possibility that all added edges have the same colour, in which case the previously added edges do not contribute to making new rainbow copies and the problem reduces to the standard rainbow saturation number.

The study of weak saturation numbers was introduced by Bollob\'{a}s~\cite{Bollobas} in 1968, where he proved that $\sat(n,K_m) = \wsat(n,K_m)$ for all $3 \le m \le 7$ and conjectured that this equality holds for all $m$. This conjecture was first proved by Lov\'{a}sz~\cite{Lovasz} using a beautiful generalisation of the Bollob\'{a}s two families theorem. It would be interesting to determine whether a similar phenomenon holds for the rainbow saturation number. 

\begin{qn}
Let $r \ge 3$. Is $\rwsat(n,K_r) = \rsat(n,K_r)?$
\end{qn}

It would be interesting to see if  upper bounds on $\rwsat(n,K_r)$ can be found that are much stronger than those provided by Theorem~\ref{clique} (note that $\rsat(n,K_r) \ge \rwsat(n,K_r)$).  
\begin{qn}
Let $r \ge 3$. Is it the case that $\rwsat(n,K_r) \le rn + O(1)$, for $n$ sufficiently large? 
\end{qn}

Alon~\cite{Alon} proved in 1985 that $\wsat(n, H) = (c(H) + o(1))n$, for all graphs $H$. The natural generalisation of this to hypergraphs was recently proved by Shapira and Tyomkin~\cite{ST}. An analogous question can be considered regarding the weak rainbow saturation number.

\begin{qn}\label{ques:lim2}
    For every graph $H$, does there exist a constant $c = c(H)$ such that
    $$\rwsat(n,H) = (c(H) + o(1))n?$$
\end{qn}

In the above definition of weak rainbow saturation we do not require $(G,\chi)$ to be rainbow $H$-free. Note however that one could alternatively consider the minimum number of edges in a weakly $H$-rainbow saturated, rainbow $H$-free graph. Unlike with weak saturation numbers, it is not clear whether this gives the same value as the weak rainbow saturation number of $H$. Indeed, a graph may contain a rainbow copy of $H$ and still be a minimal (with respect to edge removal) weakly $H$-rainbow saturated graph. For example, a rainbow $K_n$ is weakly $K_n$-saturated but removing any edge does not leave a weakly $K_n$-rainbow saturated graph, due to the stipulation that adding the edge back in in any colour must create a rainbow $K_n$. 

\subsection{Proper rainbow saturation}
As our construction does not give a proper colouring, it is natural to ask what happens if we restrict our attention to properly edge-coloured graphs. We make the following definition:
A properly edge-coloured graph $(G,c)$ is \emph{properly} $H$\emph{-rainbow saturated} if $(G,c)$ does not contain a rainbow copy of $H$, but the addition of any non-edge, in any colour from $\N$ which preserves the proper colouring, creates a rainbow copy of $H$. The \emph{proper rainbow saturation number} of $H$, denoted by $\prsat(n,H)$, is the minimum number of edges in a properly $H$-rainbow saturated edge-coloured graph on $n$ vertices.  

\begin{qn}
    Is $\prsat(n,H) \le \rsat(n,H)$ for all $H$?
\end{qn}

It is worth noting that the phrase `rainbow saturation' has appeared in the literature in a different context. Recently, Bushaw, Johnston, and Rombach~\cite{BJR} defined a different form of rainbow saturation which also requires the colouring to be proper. We will refer to this concept as \emph{BJR proper rainbow saturation} to distinguish it from the definition above. 

A graph $G$ is \emph{BJR properly} $H$\emph{-rainbow saturated} if there is a proper colouring of $G$ that does not contain a rainbow copy of $H$, but if any non-edge is added to $G$ then any proper colouring contains a rainbow copy of $H$. The \emph{BJR proper rainbow saturation number} of $H$, denoted by $\prsat'(n,H)$, is the minimum number of edges in a properly $H$-rainbow saturated graph on $n$ vertices.  

Note that the two definitions are subtly different. Any graph $G$ that is BJR properly $H$-rainbow saturated gives rise to a properly $H$-rainbow saturated edge-coloured graph $(G,c)$ by taking $c$ to be the proper colouring of $G$ that does not contain a rainbow copy of $H$. This observation tells us that $\prsat(n,H) \le \prsat'(n,H)$ for all $n$ and $H$.
However, the converse might not hold: if $(G,c)$ is properly $H$-rainbow saturated, there is no guarantee that every recolouring of $G$ plus a non-edge contains a rainbow $H$.

Bushaw, Johnston, and Rombach~\cite{BJR} proved that, for any graph $H$ that does not include an induced even cycle, the BJR proper rainbow saturation number of $H$ is linear in $n$, that is, $\prsat'(n, H) = O(n)$.
They also showed that $\prsat'(n, C_4)$ is again linear in $n$ and conjecture that, analogously to the classical saturation numbers, the proper rainbow saturation number is linear in $n$ for every $H$.
Private correspondence with Bushaw, Johnston and Rombach and independently with Barnab\'{a}s Janzer informs us that this can be shown by a straightforward application of a result of Kazsonyi and Tuza \cite{KT}. It follows that $\prsat(n,H)$ must also be linear in $n$ for all $H$. It would be interesting to know if there are any graphs $H$ where $\prsat(n,H)$ and $\prsat'(n,H)$ differ considerably.

\paragraph{Acknowledgements}
We would like to thank the anonymous referees for their helpful comments.
\bibliographystyle{abbrv}
\bibliography{rsat} 
\label{Refs}

\end{document}